\newtheorem{theor}{Theorem}[section]
\newcounter{tmp}
\newtheorem{dfn}{Definition}
\newtheorem{prop}[theor]{Proposition}
\newtheorem{corol}[theor]{Corollary}
\newtheorem{lem}[theor]{Lemma}
\newtheorem{guess}{Conjecture}
\theoremstyle{remark}
\newtheorem{rem}[theor]{Remark}
\newcommand{\Z}{\mathds{Z}}
\newcommand{\CP}{\mathds{C}\mathrm{P}}
\newcommand{\f}{\rightarrow}
\newcommand{\de}{\partial}
\newcommand{\dd}{\mathrm{D}}
\newcommand{\D}{\mathcal{D}}
\newcommand{\R}{\mathbb{R}}
\newcommand{\C}{\mathbb{C}}
\newcommand{\N}{\mathbb{N}}
\newcommand{\K }{K\"{a}hler\ }
\newcommand{\KE }{K\"{a}hler-Einstein\ }
\begin{document}
\title[2-dimensional KE metrics induced by complex projective spaces]{$2$--dimensional K\"{a}hler-Einstein metrics induced by finite dimensional complex projective spaces}

\author{Gianni Manno}
\address{(G. Manno) Dipartimento di Scienze Matematiche ``G. L. Lagrange'', Politecnico di Torino\\Corso Duca degli Abruzzi 24, 10129 Torino}
\email{giovanni.manno@polito.it}

\author{Filippo Salis}
\address{(F. Salis) Istituto Nazionale di Alta Matematica ``F. Severi'', Politecnico di Torino\\Corso Duca degli Abruzzi 24, 10129 Torino }
\email{filippo.salis@polito.it}

\thanks{The authors gratefully acknowledge support by the project ``Connessioni proiettive, equazioni di Monge-Amp\`ere e sistemi integrabili'' (INdAM), ``MIUR grant Dipartimenti di Eccellenza 2018-2022 (E11G18000350001)'' and PRIN project 2017 ``Real and Complex Manifolds: Topology, Geometry and holomorphic dynamics'' (code 2017JZ2SW5). Gianni Manno is a member of GNSAGA of INdAM.}
\subjclass[2010]{32Q20; 53C55; 35J96; 35C11}
\keywords{\KE metrics; \K immersions; complex projective spaces; Calabi's diastasis function}

\begin{abstract}
{ We give a complete list of non-isometric} bidimensional rotation invariant \KE submanifolds of a finite dimensional complex projective space endowed with the Fubini-Study metric.  This solves  in the aforementioned case a classical and long-staying problem addressed among others in \cite{ch} and \cite{ts}.
\end{abstract}

\maketitle
\tableofcontents

\section{Introduction}

\subsection{Description of the problem and state of the art}\label{sec.description}

Holomorphic and isometric immersions (from now on \emph{\K immersions}) into  complex space forms (i.e. \K manifolds with constant holomorphic sectional curvature) are a classical topic  in complex differential geometry. Even though it has been extensively studied starting from  S. Bochner's  work \cite{bochner} and  E. Calabi's   seminal paper  \cite{Cal},
a complete classification of \K manifolds admitting such type of immersions does not exist, even for \K manifolds of great interest, such as \KE manifolds and homogeneous \K ones.

In \cite{umehara2}, M. Umehara  classified \KE manifolds that are \K immersed into a finite dimensional complex space form with \emph{non-positive} holomorphic sectional curvature: they are the totally geodesic submanifolds of either the complex Euclidean space or the complex hyperbolic one.
In the case when the space form has \emph{positive} holomorphic curvature, i.e., the complex projective space  $\CP^n$ (endowed with the Fubini--Study metric $g_{FS}$), only some partial results exist (see for instance \cite{smyth,ch,ts,hano,hulin,hulinlambda}). Motivated by this,  in the present paper we consider the problem to  list those complex manifolds  admitting a \emph{projectively induced} \KE metric.



\begin{dfn}\label{def1}
We say that a \K metric on a connected complex  manifold $M$ is \emph{projectively induced}, if $M$ can be  \K immersed  into a finite dimensional\footnote{Often in the literature, the definition of projectively induced metric does not exclude that ambient complex projective space may be infinite dimensional. Our choice is dictated by purely practical reasons, indeed we are going to study a conjecture that cannot be extended to the infinite dimensional setting (see Remark \ref{infinite}).} complex projective space  $\CP^n$ endowed with the Fubini--Study metric $g_{FS}$, namely the metric associated to the \K form  given in homogeneous coordinates by $$\frac{i}{2} \de\bar\de\log\left(|Z_0|^2+\mathellipsis+ |Z_n|^2\right).$$
\end{dfn}
The most relevant  facts known so far about
complex manifolds admitting projectively induced \KE metrics can be summarized by the following theorems:
\begingroup
\setcounter{tmp}{\value{theor}}
\setcounter{theor}{0}
\renewcommand\thetheor{\Alph{theor}}
\begin{theor}[S. S. Chern \cite{ch}, K. Tsukada \cite{ts}]\label{ct}
Let $(M,g)$ be a complete $n$-dimensional K\"ahler--Einstein manifold ($n\geq 2$). If $(M,g)$ admits a K\"ahler immersion into $(\CP^{n+2},g_{FS})$, in particular $g$ is projectively induced, then $M$ is either totally geodesic or the complex quadric  in $(\CP^{n+1},g_{FS})$.
\end{theor}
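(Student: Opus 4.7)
The plan is to use the Gauss equation to convert the Einstein condition into a pointwise algebraic constraint on the second fundamental form, then perform a case analysis on the rank of the first normal space, which in codimension $2$ can take only three values.

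Concretely, let $\sigma$ denote the second fundamental form of $M\hookrightarrow(\CP^{n+2},g_{FS})$, viewed as a symmetric bilinear form on $T^{1,0}M$ with values in the normal bundle. Since $(\CP^{n+2},g_{FS})$ has constant holomorphic sectional curvature, the Gauss equation yields
\[
\mathrm{Ric}^M(X,\bar Y)=(n+1)\,g(X,\bar Y)-\sum_i\langle\sigma(X,e_i),\sigma(Y,e_i)\rangle
\]
for any local unitary frame $\{e_i\}$ of $T^{1,0}M$. Imposing $\mathrm{Ric}^M=\mu\,g$ with $\mu$ constant reduces this to
\[
\sum_i\langle\sigma(X,e_i),\sigma(Y,e_i)\rangle=\bigl((n+1)-\mu\bigr)g(X,\bar Y),
\]
so that $\sigma^*\sigma$ acts as a scalar on $T^{1,0}M$.

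Now let $N_1(p)\subset (T_p^{1,0}M)^\perp$ be the first normal space, i.e. the image of $\sigma_p$. Since the codimension is $2$ we have $\dim_{\C}N_1\in\{0,1,2\}$. If $N_1\equiv 0$, then $\sigma\equiv 0$ and the immersion is totally geodesic. If $\dim_{\C}N_1=1$ on an open set, pick a local unit normal $\xi$ spanning $N_1$ and write $\sigma=A\otimes \xi$; the scalar-operator identity forces $A^*A=c\,\mathrm{Id}$ for some constant $c$, and combined with the holomorphic symmetric nature of $A$ and the Codazzi relation $\bar\nabla_X\sigma(Y,Z)=\bar\nabla_Y\sigma(X,Z)$ this recovers exactly the intrinsic data of the complex quadric $Q_n\subset\CP^{n+1}$. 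Completeness of $g$ together with Calabi's rigidity of \K immersions then upgrades the local identification to a global isometry $(M,g)\cong(Q_n,g_{FS}|_{Q_n})$.

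The main obstacle is excluding the case $\dim_{\C}N_1=2$ on an open set. The strategy is to consider the second normal space $N_2$, generated by covariantly differentiating $\sigma$ via the normal connection and projecting onto $N_1^\perp$ inside the normal bundle. Because the total codimension is $2$, $N_2$ must vanish, so the normal bundle splits parallelly as $L_1\oplus L_2$ and $\sigma$ decomposes as $A_1\otimes\xi_1+A_2\otimes\xi_2$. Applying the Einstein identity componentwise, together with the Ricci equation for the normal bundle and the Codazzi equation, yields an overdetermined algebraic system on the pair $(A_1,A_2)$; exploiting the hypothesis $n\geq 2$ one extracts a contradiction, which eliminates this case and completes the proof.
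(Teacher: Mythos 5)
First, note that the paper does not prove this statement at all: it is quoted as background (Theorem A) and attributed to Chern \cite{ch} (the hypersurface case) and Tsukada \cite{ts} (codimension two), so your proposal can only be measured against those original arguments, not against anything in this paper.

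Your outline reproduces the standard starting point (Gauss equation plus the Einstein condition forces $\sigma^*\sigma$ to be a multiple of the identity, then a case split on $\dim_{\C}N_1\in\{0,1,2\}$), but it has a genuine gap exactly where the real difficulty of the theorem lies, namely the case $\dim_{\C}N_1=2$. Your claim that ``$N_2$ must vanish, so the normal bundle splits parallelly as $L_1\oplus L_2$'' does not follow: the vanishing of the second normal space (trivially true here, since $N_1$ already fills the normal bundle) gives no distinguished, let alone parallel, splitting of the normal bundle, and choosing a local unitary frame $\xi_1,\xi_2$ gives a decomposition $\sigma=A_1\otimes\xi_1+A_2\otimes\xi_2$ with no parallelism whatsoever. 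More importantly, the concluding step ``yields an overdetermined algebraic system \dots one extracts a contradiction'' is an assertion, not an argument; and it cannot be a purely pointwise algebraic contradiction, because pointwise there do exist second fundamental forms of rank-two first normal space with $\sigma^*\sigma$ scalar. Excluding this case requires the differential identities (Codazzi and Ricci equations, parallelism statements derived from them) together with the completeness hypothesis, and this is precisely the content of Tsukada's paper; without it the proof is missing its core. A secondary, smaller gap: in the case $\dim_{\C}N_1=1$ you implicitly use a reduction-of-codimension argument (which needs the first normal space to be parallel, not just one-dimensional on an open set) and then invoke, without proof, the full Chern--Smyth classification of complete Einstein hypersurfaces; you also need an argument (e.g.\ real analyticity of the immersion) to handle the sets where the rank of $N_1$ jumps before any ``open set'' case analysis can be glued into a global conclusion.
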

\begin{theor}[D. Hulin  \cite{hulinlambda}]
If a compact \KE manifold is projectively induced then its Einstein constant is positive.
\end{theor}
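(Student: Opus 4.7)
The plan is to translate the hypotheses into a cohomological identity via the adjunction formula, and then recover the sign of the Einstein constant $\lambda$ from positivity. Let $f\colon M\to\CP^n$ be the \K immersion, with $\omega_M = f^*\omega_{FS}$; the \K class $[\omega_M]$ is then a positive rational multiple of $f^*H$, where $H = c_1(\mathcal{O}(1))$. The adjunction sequence $0 \to TM \to f^*T\CP^n \to N_M \to 0$ gives $c_1(TM) + c_1(N_M) = (n+1)f^*H$, while the \KE hypothesis $\mathrm{Ric}(\omega_M) = \lambda\,\omega_M$ translates to $c_1(TM) = \frac{\lambda}{2\pi}[\omega_M]$. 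Consequently the sign of $\lambda$ coincides with the sign of $c_1(TM)\cdot[\omega_M]^{d-1}$, where $d = \dim_{\C} M$, equivalently with the sign of $\int_M \mathrm{Ric}(\omega_M)\wedge\omega_M^{d-1}$, i.e.\ with the sign of the total scalar curvature of $(M,\omega_M)$.

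To control this sign I would use the Gauss equation for \K submanifolds of $(\CP^n,g_{FS})$: since $(\CP^n, g_{FS})$ is itself \KE with positive Einstein constant $c_n>0$, one has
$$\mathrm{Ric}(\omega_M) \;=\; c_n\,\omega_M \;-\; \Phi_f,$$
where $\Phi_f$ is a semipositive $(1,1)$-form built quadratically from the second fundamental form of $f$. Tracing against $\omega_M$ and integrating over the compact manifold $M$, the desired conclusion $\lambda>0$ reduces to the strict inequality
$$\int_M \Phi_f \wedge \omega_M^{d-1} \;<\; c_n\int_M \omega_M^d,$$
that is, to an $L^2$-type upper bound on the second fundamental form depending only on the ambient dimension and the total volume of $M$.

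The main obstacle is proving this strict inequality. Mere positivity of the normal bundle $N_M$---automatic because $N_M$ is a holomorphic quotient of the Griffiths-positive bundle $T\CP^n|_M$---only yields the opposite, easier bound $\lambda<c_n$; the lower bound $\lambda>0$ is the substantive content of the theorem. To obtain it one must exploit that $T\CP^n$ is not merely positive but ample and globally generated (being a quotient of $\mathcal{O}(1)^{\oplus(n+1)}$), and that $M$ is compact of positive codimension. A promising route combines a Chern--Weil computation of $c_1(TM)\cdot[\omega_M]^{d-1}$ with integration by parts on $M$ using the explicit curvature of the Fubini--Study connection. An alternative, more analytic route is to argue by contradiction via the rigidity of projectively induced \K potentials $\phi = \log\sum|f_i|^2$: the local \KE equation $\det(g_{i\bar\jmath}) = e^{-\lambda\phi}|F|^2$ (with $F$ holomorphic) would, for $\lambda\le 0$, impose a global plurisubharmonic structure on the right-hand side that is incompatible with its being a nowhere-vanishing smooth volume form on the compact manifold $M$.
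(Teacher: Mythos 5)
The statement you are proving is quoted in the paper from Hulin \cite{hulinlambda}; the paper itself gives no proof, so your proposal can only be compared with the known argument, and on its own terms it does not close. The gap is the one you yourself flag, and it is not a technicality: all the content of the theorem is concentrated in the unproved strict inequality $\int_M \Phi_f\wedge\omega_M^{d-1}<c\int_M\omega_M^{d}$. The adjunction identity and the identification of the sign of $\lambda$ with the sign of the total scalar curvature are correct but merely reformulate the problem, and the trace of the Gauss equation gains nothing from integration: for a \KE submanifold the scalar curvature, hence $|\sigma|^2$, is constant, so your integral bound is literally the pointwise bound $|\sigma|^2<c$, i.e.\ the statement $\lambda>0$ rewritten. (Also, the constant produced by the Gauss equation is the Einstein constant of a totally geodesic $\CP^{d}$, $d=\dim_{\C}M$, not a constant attached to the ambient dimension $n$.) There is no a priori pointwise or $L^2$ control on the second fundamental form of a projective \K submanifold for your first route (Chern--Weil plus integration by parts) to exploit, and compactness enters your scheme only through integration, which, as just noted, is vacuous here. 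Your second route gestures at the right objects (the potential $\log\sum|f_i|^2$ and the equation $\det(g_{i\bar\jmath})=e^{-\lambda\phi}|F|^2$), but ``a global plurisubharmonic structure incompatible with a nowhere-vanishing volume form'' is not an argument: for $\lambda\le 0$ everything on the right-hand side is locally plurisubharmonic without any immediate contradiction, and you never identify the mechanism through which compactness and projectivity interact.

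That mechanism, in Hulin's proof and in the diastasis formalism recalled in Sections \ref{diastasis} and \ref{realMA} of this paper, is potential-theoretic rather than cohomological: for a projectively induced metric the diastasis centered at $p$ is $\dd_p(q)=-\log\frac{|\langle Z(p),Z(q)\rangle|^2}{|Z(p)|^2\,|Z(q)|^2}$, hence nonnegative, and it blows up along the hyperplane section $\{\langle Z(p),Z(\cdot)\rangle=0\}$, which is nonempty precisely because $M$ is compact (a positive-dimensional projective subvariety meets every hyperplane). Feeding this into the \KE equation written in Bochner coordinates, $\det(g_{\alpha\bar\beta})=e^{-\frac{\lambda}{2}\dd_p}$ (the same identity as Lemma \ref{realma} here), is what excludes $\lambda\le 0$; this is exactly the global input missing from your curvature/Chern-class setup. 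To salvage your proposal you would have to import this diastasis argument, at which point the cohomological preamble becomes unnecessary.
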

\endgroup
\setcounter{theor}{\thetmp}

Considering the previous results and taking also into account that all the explicit examples hitherto known are homogeneous manifolds (cfr. \cite{tak}), it has been  proposed the following conjecture (see e.g. \cite[Chap. 4]{loizedda}):

\begin{guess}\label{conj1}
If $(M,g)$ is  a \KE manifold endowed with a projectively induced metric, then it is an open subset of a complex flag manifold\footnote{A compact simply-connected \K manifold acted upon  transitivity by its holomorphic isometry group.}.
\end{guess}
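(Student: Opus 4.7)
The plan is to combine Calabi's diastasis with the \KE{} Monge-Amp\`ere equation and Calabi's rigidity in order to force the immersion to have a finite-dimensional algebraic structure, from which homogeneity will follow. By Hulin's theorem the Einstein constant $\lambda$ must be positive, and since projective induction forces $g$ to be real-analytic, I would first pass to the maximal analytic continuation of $(M,g)$: Calabi's rigidity propagates the immersion uniquely along any real-analytic extension, and together with positivity of $\lambda$ standard completeness arguments should upgrade the extension to a \emph{compact} \KE{} manifold \K{} immersed in some $\CP^N$.

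Next, fix $p\in M$ and work with the Calabi diastasis $D_p$ at $p$. Projective induction is equivalent, by Calabi's criterion, to the existence of local holomorphic functions $f_1,\ldots,f_n$ vanishing at $p$ such that
\[
D_p(z,\bar z)=\log\!\Bigl(1+\sum_{i=1}^{n}|f_i(z)|^2\Bigr),
\]
the $f_i$ being unique up to a unitary rotation. The \KE{} condition expressed in terms of $D_p$ becomes a complex Monge-Amp\`ere equation
\[
\log\det(\de\bar\de D_p)=-\lambda D_p+\varphi(z)+\overline{\varphi(z)}
\]
for some local holomorphic $\varphi$. Substituting the projective expression for $D_p$ into this equation and expanding both sides as power series in $(f_i,\bar f_j)$ produces an infinite cascade of identities among the Taylor coefficients of the $f_i$. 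The strategy is to show that this cascade forces the $\complex$-linear span of the $f_i$ and their products to be finite dimensional and closed under the natural ``multiplication'' operation appearing in the Bergman kernel expansion; this closure is the analytic manifestation of algebraicity of the immersion and is exactly what Umehara exploits in the non-positively curved case, and Chern-Tsukada in codimension two.

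Once such a finite-dimensional algebraic structure is established, $M$ is realized as a projective algebraic subvariety of $\CP^N$ of bounded degree. To close the argument I would then exhibit a transitive action of a compact real Lie group on $M$ by holomorphic isometries: the \KE{} condition selects a distinguished embedding (compare \cite{tak}) whose linear automorphism group inside $\mathrm{PU}(N+1)$ must act transitively, since any non-open orbit would contradict the rigidity of the \KE{} Monge-Amp\`ere equation coupled with the finite-dimensional algebraic structure. Compact simply-connected homogeneous \K{} manifolds are classified as complex flag manifolds by the Borel-Matsushima theorem, yielding the conjecture.

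The hard part will be the algebraic step. Without a codimension restriction of Chern-Tsukada type (Theorem~A) nor the non-positive curvature assumption of Umehara, the \KE{} Monge-Amp\`ere equation coupled to the diastasis expansion is an overdetermined but large-symbol PDE system, and extracting finite dimensionality of the span of the $f_i$ will probably require a genuinely new ingredient—perhaps a moment-map/GIT reinterpretation of projective induction in the spirit of Donaldson's stability theory, or a Bochner-type identity tailored to the positive \KE{} case. The present paper handles the two-dimensional rotation-invariant slice, where rotational symmetry reduces the Monge-Amp\`ere equation to an ODE and the algebraic step becomes tractable; the general conjecture remains open precisely because this reduction is unavailable.
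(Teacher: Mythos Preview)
The statement you are attempting to prove is Conjecture~\ref{conj1}, and the paper does \emph{not} prove it; it is presented precisely as an open problem, and the paper's main theorem only settles the $2$--dimensional rotation invariant special case (your last paragraph acknowledges this). So there is no ``paper's own proof'' to compare against, and your proposal should be read as a strategy sketch rather than a proof.

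As a strategy, the proposal has genuine gaps beyond the one you flag yourself. First, your opening step is circular: you invoke Hulin's theorem to conclude $\lambda>0$ and then use positivity plus Myers to obtain compactness, but Hulin's result (Theorem~B in the paper, \cite{hulinlambda}) \emph{assumes} compactness. Without an independent argument for $\lambda>0$ you cannot bootstrap to a compact model; in the paper this is achieved only under the rotation invariance hypothesis (Lemma~\ref{bound}), and no general argument is known. Second, the ``algebraic step'' is not merely hard but entirely absent: asserting that the Monge--Amp\`ere cascade forces the span of the $f_i$ and their products to be finite dimensional is exactly the content of the conjecture, and nothing in the proposal indicates how this would be extracted. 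Third, the transitivity argument (``any non-open orbit would contradict rigidity of the \KE{} Monge--Amp\`ere equation'') is not an argument at all; rigid \KE{} projective subvarieties with non-transitive isometry group are not ruled out by anything you have written.

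In short: you have correctly identified the relevant ingredients (diastasis, Calabi rigidity, the \KE{} Monge--Amp\`ere equation, Hulin's extension results), but the proposal does not constitute a proof, and the paper makes no claim to one either.
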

\begin{rem}\label{infinite}
The conjecture cannot be extended to \KE manifolds embedded into the infinite dimensional complex projective space\footnote{The classification of \KE manifolds admitting an immersion into an infinite dimensional complex space form is an open problem in all three cases (for some partial results see e.g. \cite{symm,extr,mathz,tohoku}).}, indeed  explicit examples of such non-homogeneous \KE manifolds can be found in \cite{wallart,hao}.
\end{rem}

\subsection{Description of the main result}

The present paper is  a first step toward a more ambitious research plan aimed at  approaching  the problem described in Section \ref{sec.description} (in particular, Conjecture \ref{conj1}) from a different  perspective compared to the past: we do not give any assumption  about the codimension of the studied immersions (cfr.  \cite{smyth,ch,ts,salis}). Our only assumption involves the group of symmetries of the metric.
Indeed,  our goal  will be to test the above mentioned conjecture in  the  case of {rotation invariant} \K metrics (see also \cite{extr} for a list of  projectively induced extremal metrics in the   radial\footnote{I.e. those \K metrics admitting a local potential depending only on the sum of the moduli of  certain local coordinates.} case).
\begin{dfn}
 A \K metric $g$ on a connected complex manifold $M$  is said to be \emph{rotation invariant} if  there exist a point $p\in M$, a local coordinate system $(z_1,\dots, z_n)$ centered at $p$ and a (local) K\"ahler potential $\Phi$ for $g$   such that $\Phi$  only depends on $|z_1|^2,\dots,|z_n|^2$.
 \end{dfn}

Since complex projective spaces are the only irreducible rotation invariant flag manifolds (cfr. \cite{APcoord,hamb}) and since only the integer multiples of the Fubini-Study metric  are projectively induced  (see \cite{Cal,loizedda}),  in the specific case of  rotation invariant \K metrics   Conjecture  \ref{conj1} reads as:
\begin{guess}\label{subconj}
The only projectively induced and rotation invariant \KE manifolds are open subsets of $\CP^{n_1}\times\mathellipsis\times\CP^{n_k}$ endowed with the \K metric
$$q\left(c_1 g_{FS} \oplus \mathellipsis \oplus c_k g_{FS}\right), $$
where $k$ and $q\in\Z^+$, $c_i=\frac{1}{G^{k-1}}\prod_{{ j\neq i}}(n_j+1)$ for $i=1,\mathellipsis,k$ and $G=\mathrm{gcd}(n_1+1,\mathellipsis, n_k+1)$, namely   the greatest common divisor between $n_1+1,\mathellipsis, n_k+1$.
\end{guess}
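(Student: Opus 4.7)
The plan is to exploit the interplay between Calabi's rigidity characterization of projective inducibility and the K\"{a}hler-Einstein equation to force a separable form of the K\"{a}hler potential, from which the product structure and the arithmetic of the coefficients will follow.

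To begin, normalize the potential so that $\Phi(0)=0$. By Calabi's rigidity theorem, $g$ is projectively induced into a finite dimensional $\CP^N$ if and only if $e^{\Phi}-1$ is a finite sum $\sum_j|f_j(z)|^2$ of squared moduli of holomorphic functions. The rotation symmetry of $\Phi$ means that the compact torus $T^n$ acts on $z=(z_1,\ldots,z_n)$ preserving $\Phi$; averaging the $f_j$'s over this action replaces them with monomials, and in the variables $x_i=|z_i|^2$ the projective inducibility condition becomes the polynomial identity
\begin{equation*}
e^{\Phi(x_1,\ldots,x_n)} \;=\; 1+\sum_{\alpha\in A}a_\alpha\,x_1^{\alpha_1}\cdots x_n^{\alpha_n},
\end{equation*}
with $A\subset\mathbb{N}^n$ finite and each $a_\alpha>0$. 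Rotation invariance also guarantees that $\det(g_{i\bar j})$ is itself a function of the $x_i$'s, expressible algebraically in the first and second partial derivatives of $\Phi$ with respect to the $x_i$, so the K\"{a}hler-Einstein equation is equivalent to a single relation $\det(g_{i\bar j})=C\,e^{-\mu\Phi}$. Combined with the previous step, both sides are algebraic expressions built from the one polynomial $e^{\Phi}$, and Hulin's theorem ensures $\mu>0$.

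The heart of the argument is to deduce from this joint algebraic constraint that $e^{\Phi}$ factorizes as
\begin{equation*}
e^{\Phi}(x) \;=\; \prod_{i=1}^k\Bigl(1+\sum_{j\in I_i}x_j\Bigr)^{m_i}
\end{equation*}
for some partition $\{1,\ldots,n\}=I_1\sqcup\cdots\sqcup I_k$ and positive integers $m_i$. The strategy is to analyze the monomial support $A$: comparing irreducible factors and leading-degree terms in the polynomial identity $\det(g_{i\bar j})\cdot e^{\mu\Phi}=C$ should prevent monomials from coupling variables in distinct blocks. Once separability is in place, each one-block factor is a rotation invariant projectively induced K\"{a}hler-Einstein potential in its own right, hence an open subset of $\CP^{n_i}$ with K\"{a}hler potential $m_i\log\bigl(1+\sum_{j\in I_i}x_j\bigr)$. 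Matching Einstein constants across factors imposes a linear proportionality of the $m_i$'s with the $n_i+1$, and the integrality of each $m_i$ coming from Calabi's theorem applied to the Veronese embedding of each factor yields the formula for the $c_i$ and the integer $q$ stated in the conjecture.

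The principal obstacle is the separability step. The present paper handles the case $n=2$ by a direct case-by-case analysis of the admissible supports $A\subset\mathbb{N}^2$, which is tractable because only a handful of monomial patterns can occur. In higher dimensions the combinatorial space of admissible supports is much richer, and one anticipates an inductive approach via restriction to coordinate hypersurfaces $\{z_j=0\}$; however, such restrictions remain projectively induced but are generally not K\"{a}hler-Einstein, so non-trivial additional input is required to propagate KE-rigidity back from the restrictions to the ambient potential. A second delicacy is that the integer multipliers $m_i$ must be compatible with a single common Einstein constant, and controlling this globally is what ultimately produces the $\gcd$-based arithmetic formula for the coefficients.
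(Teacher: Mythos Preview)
The statement is Conjecture~2 in the paper and is \emph{not} proved there in general; the paper establishes only the two-dimensional case (Theorem~1.3). Your proposal is therefore an attempt to go beyond the paper, not a reconstruction of its argument. The architecture you sketch---reducing $e^{\Phi}$ to a polynomial in $x_i=|z_i|^2$ with positive coefficients (this is Lemma~2.1) and coupling it with the real Monge--Amp\`ere form of the Einstein equation (Lemma~2.2)---is exactly the set-up the paper uses. But the ``heart of the argument'', as you call it, namely the separability of $e^{\Phi}$ into factors supported on disjoint blocks of variables, is precisely the open content of the conjecture, and you offer no mechanism for it beyond ``comparing irreducible factors and leading-degree terms''. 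That is the genuine gap, and neither you nor the paper fills it for $n\ge3$.

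A few smaller corrections. First, invoking Hulin's theorem for $\mu>0$ is not legitimate here, since Hulin assumes compactness; the paper instead extracts $\lambda>0$ (and rational, and $\le 2(n+1)$) directly from the polynomial identity (Lemma~2.3). Second, your factorized ansatz $\prod_i(1+\sum_{j\in I_i}x_j)^{m_i}$ violates the Bochner normalization $\Phi=\sum_i x_i+O(2)$ unless every $m_i=1$; the correct target is $\prod_i\bigl(1+m_i^{-1}\sum_{j\in I_i}x_j\bigr)^{m_i}$, as the solutions listed in Proposition~2.5 confirm. Third, your description of the paper's $n=2$ method as a ``case-by-case analysis of the admissible supports $A\subset\mathbb{N}^2$'' is inaccurate: the paper does not enumerate monomial supports. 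It restricts $P=e^{\Phi}$ to each coordinate axis and shows the restriction must be $(1+t/k)^k$ (Lemma~2.6), derives from mixed second derivatives at the origin the Diophantine constraint $s^2k^2-5sk+6=0$, which pins down finitely many Cauchy data on $\{x_2=0\}$ (Corollary~2.7), and then proves uniqueness from such data by a Taylor recursion in the normal variable (Lemma~2.8). This is a Cauchy--Kowalevskaya-type scheme rather than support combinatorics, and its extension to $n\ge3$ is not at all evident.
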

\begin{rem}\label{embedding}
The homogeneous spaces  $\big(\CP^{n_1}\times\dots\times\CP^{n_k},q (c_1g_{FS}\oplus\dots\oplus  c_kg_{FS})\big)$ are fully embedded into  $\CP^{{n_1+q c_1\choose q c_1}\cdots{n_k+q c_k\choose q  c_k}-1}$. A \K embedding can be explicitly described through a composition of suitable normalizations of the Veronese embeddings:
\begin{align*}
(\CP^{n},cg_{FS})& \to (\CP^{\binom{n+c}{c}-1},g_{FS}) \\
[Z_i]_{0\leq i\leq n}& \mapsto\sqrt\frac{(c-1)!}{c^{c-2}} \left[\frac{Z_0^{c_0}\mathellipsis Z_n^{c_n}}{\sqrt{c_0!\mathellipsis c_n!}} \right]_{c_0+\mathellipsis +c_n=c},
\end{align*}
together with a {Segre embedding}  (cfr. \cite{Cal,loizedda}).

 \end{rem}

Our main result  is  { contained in the following theorem, that solves Conjecture \ref{subconj} in the $2$-dimensional case.}

\begin{theor}\label{mainth}
If $(M,g)$ is a $2$-dimensional \KE manifold whose metric is rotation invariant and projectively induced, then $(M,g)$  is an open subset of either $(\CP^2,q\, g_{FS})$ or \mbox{$\left(\CP^1\times\CP^1,q(g_{FS}\oplus g_{FS})\right)$}, where $q\in\Z^+$.
\end{theor}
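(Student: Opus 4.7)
The strategy is to reduce the classification to a polynomial identity on the \K potential and then to solve it through its restriction to the coordinate axes.

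First, I would combine Calabi's rigidity theorem with the rotational symmetry: the diastasis at the centre $p\in M$ takes the form $\Phi=\log P(|z_1|^2,|z_2|^2)$, where $P$ is a polynomial in $x_i:=|z_i|^2$ with $P(0,0)=1$, non-negative coefficients $a_{ij}$, and only finitely many $a_{ij}\neq 0$ (Calabi's criterion for metrics induced by finite-dimensional complex projective spaces). The \KE condition $\mathrm{Ric}(g)=\lambda g$ is encoded by the complex Monge-Amp\`ere equation $\det(g_{i\bar j})=c\,e^{-\lambda\Phi}$ which, combined with rotational symmetry (it kills the holomorphic ambiguity), becomes the polynomial identity
\begin{equation*}
\bigl[PP_1+x_1(PP_{11}-P_1^2)\bigr]\bigl[PP_2+x_2(PP_{22}-P_2^2)\bigr]-x_1x_2(PP_{12}-P_1P_2)^2=c\,P^{4-\lambda},
\end{equation*}
where subscripts denote partial derivatives in the real variables $x_i$. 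A bidegree comparison of the two sides immediately forces $\lambda\in\mathbb{Q}_{>0}$, consistently with Hulin's theorem.

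Second, I would restrict this identity to the axes $\{x_2=0\}$ and $\{x_1=0\}$. Setting $p(x_1):=P(x_1,0)$, $h:=(\log p)'$ and $s(x_1):=P_2(x_1,0)$, the full identity collapses, after factoring out $p^{3}$, to
\begin{equation*}
s(x_1)\cdot\bigl(x_1\,h(x_1)\bigr)'=c\,p(x_1)^{1-\lambda},
\end{equation*}
and symmetrically on the other axis. Analyzing the poles and zeros of both sides as rational functions, and using crucially that $p$ and $s$ are polynomials with non-negative coefficients (Calabi's criterion applied to the restricted immersion $\{z_2=0\}\hookrightarrow M\hookrightarrow\CP^N$), forces $p(x_1)=(1+x_1)^{n_1}$ and $P(0,x_2)=(1+x_2)^{n_2}$ after an admissible rescaling of the coordinates, and pins down $\lambda\in\{3/q,\,2/q\}$ with $q:=n_1=n_2\in\Z^+$.

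Third, having fixed the axis behaviour, the remaining coefficients $a_{ij}$ with $i,j\geq 1$ would be determined by induction on the bidegree $i+j$, matching coefficients of the Monge-Amp\`ere identity monomial by monomial. The two admissible values of $\lambda$ produce two disjoint recurrence systems whose unique non-negative solutions are $P=(1+x_1+x_2)^q$ (when $\lambda=3/q$) and $P=(1+x_1)^q(1+x_2)^q$ (when $\lambda=2/q$), corresponding via Remark~\ref{embedding} to $(\CP^2,q\,g_{FS})$ and $(\CP^1\times\CP^1,q(g_{FS}\oplus g_{FS}))$ respectively. The main obstacle is precisely this inductive step: one cannot a priori exclude \emph{mixed} or \emph{perturbed} solutions in which $P$ deviates from the two model polynomials by a non-negative tail $\sum_{i+j\geq k}b_{ij}x_1^ix_2^j$, and Calabi's positivity condition $b_{ij}\geq 0$ has to be combined in an essential way with the nonlinearity of the Monge-Amp\`ere identity in order to propagate the rigidity from the axes to the whole potential.
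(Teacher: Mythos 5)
Your first two steps essentially retrace the paper's reduction: the diastasis is $\log P$ with $P$ a polynomial with positive coefficients (Calabi), the Einstein condition becomes a polynomial Monge--Amp\`ere identity forcing $\lambda$ rational and bounded, and restriction to the coordinate axes forces the axis data to be powers of a single linear factor. Two of your claims there, however, are asserted without the arguments that actually carry them: (i) that $P(x_1,0)$ is a power of one linear factor needs more than ``analyzing poles and zeros plus positivity'' --- the paper proves it by writing the restriction through its distinct roots and showing that the resulting identity, read as a linear system in the multiplicities, has a Vandermonde-type nonvanishing determinant as soon as there are at least two distinct roots, which forces a single root; and (ii) you state $n_1=n_2$ and the two admissible values of $\lambda$ with no derivation, whereas the paper gets the equality of the two exponents from $\frac{\partial^2 P}{\partial x_1\partial x_2}(0,0)$ and the admissible pairs $(s,k)$ from the Diophantine equation $s^2k^2-5sk+6=0$ obtained by substituting the axis data back into the equation. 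These are fixable, but they are not steps one can wave at.

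The genuine gap is the one you flag yourself: propagating the rigidity from the axes to the whole polynomial. Your proposed bidegree induction, in which positivity of the tail coefficients must be ``combined in an essential way'' with the nonlinearity, is precisely what you do not supply, and it is the crux of the theorem. The paper closes it with no positivity at all, by observing that the axis data are genuine Cauchy data: besides $P(x_1,0)=\left(1+\frac{x_1}{k}\right)^k$, the restricted equation also determines $\frac{\partial P}{\partial x_2}(x_1,0)$ (your $s(x_1)$). Setting $F_s:=\D_2(P)-P^{3-s}$, one computes that $\frac{\partial^h F_s}{\partial x_2^h}(x_1,0)$ equals
\begin{equation*}
h\left( P\frac{\partial^2 P}{\partial x_1^2}x_1-\left(\frac{\partial P}{\partial x_1}\right)^2x_1+P\frac{\partial P}{\partial x_1}\right)(x_1,0)\;\frac{\partial^{h+1} P}{\partial x_2^{h+1}}(x_1,0)
\end{equation*}
plus terms not involving this highest pure normal derivative; since the displayed coefficient equals $\left(1+\frac{x_1}{k}\right)^{2k-2}\not\equiv 0$, the equation determines $\frac{\partial^{h+1}P}{\partial x_2^{h+1}}(x_1,0)$ recursively from lower-order data, hence the entire Taylor expansion of $P$ along $x_2=0$, hence $P$ itself. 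Since the model polynomials satisfy the corresponding Cauchy data, they are the only solutions. Without an argument of this kind (or an actual resolution of your recurrence that rules out the ``mixed tails'' you mention), your third step is a plan, not a proof.
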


\section{Proof of Theorem \ref{mainth}}

The proof of Theorem \ref{mainth} is organized in three subsections, described below.

In Section \ref{diastasis}, we recall the definition of Calabi's diastasis function and Bochner's coordinates.


In Section \ref{realMA}, on account of the results recalled in Section \ref{diastasis}, by proving several auxiliary lemmas, we  rephrase in Proposition \ref{newmain} the statement of Theorem \ref{mainth}  in terms of existence and uniqueness of polynomial solutions of a particular family of real Monge-Ampère equations, where the unknown function is the Calabi's diastasis function and the independent variables are the moduli of the Bochner's coordinates. The existence of polynomial solutions is a part of Proposition \ref{newmain}, whereas the proof of the uniqueness of such solutions is the core of Section \ref{sec.proof.prop}.

In fact, in Section \ref{sec.proof.prop}, we find a set of suitable initial conditions for the aforementioned family of Monge-Ampère equations: an arbitrary polynomial solution to a Monge-Ampère equation of this family needs to satisfy one and only one initial condition of such set. Taking this into account, in the end of the section, we prove that the solutions we listed in Proposition \ref{newmain} are actually unique, thus getting the statement of Theorem \ref{mainth}.


\subsection{Calabi's diastasis function}\label{diastasis}
In order to prove Theorem \ref{mainth}, we need to recall the definition of Calabi's diastasis function and some of its properties.

Let $(M,g)$  be a \K manifold with a local \K potential $\Phi$, namely $\omega=\frac{i}{2}\de\bar\de\Phi$,
 where $\omega$ is the \K form associated to $g$. If $g$ (and hence $\Phi$) is assumed to be real analytic, by duplicating the variables $z$ and $\bar z$, $\Phi$ can be complex analytically extended to a function $\tilde \Phi$ defined in a neighbourhood $U$ of the diagonal containing $(p,\bar p)\in M\times \bar M$ (here $\bar M$ denotes the manifold conjugated to $M$). Thus one can consider the power expansion of $\Phi$ around the origin  with respect to $z$ and $\bar z$ and write it as
\begin{equation}\label{powexdiastc}
\Phi(z,\bar z)=\sum_{j,l=0}^{\infty}a_{jl}z^{m_j}\bar z^{m_l},
\end{equation}
where we arrange every $n$-tuple of nonnegative integers as a sequence \linebreak$m_j=(m_{j,1},\dots,m_{j,n})$ and order them as follows: $m_0=(0,\dots,0)$ and if $|m_j|=\sum_{\alpha=1}^n m_{j,\alpha}$, $|m_j|\leq |m_{j+1}|$ for all positive integer $j$. Moreover, $z^{m_j}$ denotes  the monomial in $n$ variables $\prod_{\alpha=1}^n z_\alpha^{m_{j,\alpha}}$.

 A \K potential is not unique, but it is defined up to an addition of the real part of a holomorphic function. The \emph{diastasis function} $\dd_0$ for $g$ is nothing but the \K potential around $p$ such that each matrix $(a_{jk})$ defined according to equation (\ref{powexdiastc}) with respect to  a coordinate system $z=(z_1,\dots,z_n)$ centered in $p$, satisfies $a_{j0}=a_{0j}=0$ for every nonnegative integer $j$.

Moreover, for any real analytic K\"ahler manifold there exists a coordinates system, in a neighbourhood of each point,  such that
\begin{equation}\label{bochnercoordinates}
\dd_0(z)=\sum_{\alpha=1}^n|z_\alpha|^2+\psi_{2,2},
\end{equation}
where $\psi_{2,2}$ is a power series with degree $\geq 2$ in both $z$ and $\bar z$. These coordinates, uniquely determined up to unitary transformation (cfr. \cite{bochner,Cal}), are called \emph{Bochner's coordinates} (cfr. \cite{bochner,Cal,hulin,hulinlambda,ruan,tian4}).

Notice that throughout this paper we will consider either projectively induced metrics or \KE metrics. In both cases these metrics are real analytic and hence diastasis functions and Bochner's coordinates are defined. Moreover, in the particular case of rotation invariant metrics, the diastasis function around the origin of the Bochner's coordinates system is a rotation invariant \K potential.\\

\subsection{Real Monge-Ampère equations}\label{realMA}
The  lemmas contained in this section hold for manifolds of arbitrary dimension. By applying them to the bidimensional case, we  show how the property of the projectively induced metrics to be rotation invariant,  allows us to address Conjecture \ref{subconj} through real analysis' techniques. Indeed, we prove the equivalence of the statement of Theorem \ref{mainth} to a uniqueness problem in a class of  solutions of a family of real Monge-Ampère equations (Proposition \ref{newmain}).

\begin{lem}\label{diastpolin}
Let $V$ be an open subset of $\C^n$ where it is defined   a  rotation invariant potential for a \K metric $g$. Let  $f: (V,g)\f (\CP^N,g_{FS})$ be a full\footnote{A holomorphic  immersion $f\!:U\f\CP^n$ is said to be \emph{full} provided $f(U)$ is not contained in any $\CP^h$ for $h< n$.} \K immersion.
Then $\dd_0(z)$  can be written  as
\begin{equation}\label{diastbochpi}
\dd_0(z)=\log\left(P(z)\right),
\end{equation}
where
\begin{equation}\label{eq.Pz}
P(z)=1+\sum_{j=1}^n|z_j|^2+\sum_{j=n+1}^N a_j|z^{m_{h_j}}|^2
\end{equation}
with $a_j>0$ and $h_j\neq h_l$ for $j\neq l$.
\end{lem}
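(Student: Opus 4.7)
The plan is to combine Calabi's hereditary property of the diastasis function with the hypothesis of rotation invariance, and then to simultaneously ``diagonalize'' the holomorphic components of $f$ by means of a unitary transformation of the target $\C^N$. First, since $(\CP^N,g_{FS})$ is real analytic and its diastasis based at $[1:0:\dots:0]$ reads $\log\bigl(1+\sum_{j}|w_j|^2\bigr)$ in the standard affine chart, after composing $f$ with a holomorphic isometry of $(\CP^N,g_{FS})$ that sends $f(0)$ to $[1:0:\dots:0]$, Calabi's hereditary principle yields
\[
\dd_0(z)=\log\Bigl(1+\sum_{j=1}^N|f_j(z)|^2\Bigr),
\]
where $f_1,\dots,f_N$ are the holomorphic components of $f$ in that affine chart and $f_j(0)=0$ for every $j$. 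The rotation invariance of the given potential transfers to $\dd_0$ (the two differ by a real additive constant only), hence $P(z):=e^{\dd_0(z)}=1+\sum_{j=1}^N|f_j(z)|^2$ is rotation invariant too.

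Expanding each $f_j(z)=\sum_{m}c_{j,m}z^m$, the rotation invariance of $\sum_{j=1}^N|f_j|^2$ forces the coefficient of $z^m\bar z^{m'}$ to vanish for every $m\neq m'$, i.e.
\[
\sum_{j=1}^N c_{j,m}\,\overline{c_{j,m'}}=0\quad\text{for all } m\neq m'.
\]
Hence the ``coefficient vectors'' $v_m:=(c_{j,m})_{j=1}^N\in\C^N$ form a pairwise orthogonal family, so at most $N$ of them can be nonzero; denote the corresponding multi-indices by $m_{h_1},\dots,m_{h_k}$, with $k\le N$. A suitable $U\in\U(N)$ sends each nonzero $v_{m_{h_s}}$ to a positive real multiple of the $s$-th standard basis vector; since $U$ induces a holomorphic isometry of $(\CP^N,g_{FS})$ fixing the origin, replacing $f$ by $U\circ f$ we obtain $f_s(z)=\sqrt{a_s}\, z^{m_{h_s}}$ for $s\le k$ (with $a_s>0$) and $f_s\equiv 0$ for $s>k$. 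Fullness of $f$ rules out the latter possibility, so $k=N$, all $a_j>0$, and the indices $h_j$ are pairwise distinct.

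Finally, since the $z_\alpha$ may be taken to be Bochner coordinates (for rotation invariant metrics these are compatible with rotation invariance of the diastasis, as recalled in Section \ref{diastasis}), one has $\dd_0(z)=\sum_{\alpha=1}^n|z_\alpha|^2+\psi_{2,2}$; comparing the degree-$2$ part of this with the degree-$2$ part of $\log\bigl(1+\sum_j a_j|z^{m_{h_j}}|^2\bigr)=\sum_j a_j|z^{m_{h_j}}|^2+O(|z|^4)$ forces exactly $n$ of the monomials $z^{m_{h_j}}$ to be of total degree one with coefficient $a_j=1$. After relabeling so that these correspond to $j=1,\dots,n$ with $m_{h_j}=e_j$, the remaining monomials have total degree $\ge 2$, yielding the asserted form of $P(z)$. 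The main technical point to verify is the simultaneous orthogonality of the (a priori infinite) family $\{v_m\}$ and its reduction to monomials via a single $U\in\U(N)$; once this is in place, the rest is bookkeeping with lowest-order terms and the definition of fullness.
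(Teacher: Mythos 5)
Your proof is correct, and its skeleton is the one the paper uses: base the diastasis at $f(0)=[1:0:\dots:0]$, apply Calabi's hereditary property to get $\dd_0=\log\bigl(1+\sum_{j}|f_j|^2\bigr)$, and then exploit rotation invariance together with fullness. The difference lies in how the monomial structure is extracted. The paper first invokes Calabi's Theorem 7 of \cite{Cal} (normal form of a K\"ahler immersion in Bochner coordinates) to force $f=(z_1,\dots,z_n,f_{n+1},\dots,f_N)$ with the last $N-n$ components built only from monomials of degree $\geq 2$, and then states rather tersely that rotation invariance and fullness make those components monomials. You instead treat all $N$ components at once: rotation invariance of $1+\sum_j|f_j|^2$ gives pairwise orthogonality of the coefficient vectors $v_m\in\C^N$, a single $U\in\U(N)$ (which extends to a holomorphic isometry of $\CP^N$ fixing the base point, hence changes neither $\dd_0$ nor fullness) turns the nonzero $v_m$ into positive multiples of distinct basis vectors, fullness forces exactly $N$ of them, and the comparison with the Bochner normalization $\dd_0=\sum_{\alpha}|z_\alpha|^2+\psi_{2,2}$ recovers the $n$ linear monomials with unit coefficient and pushes the remaining ones to degree $\geq 2$. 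This buys a more self-contained argument that makes explicit the unitary diagonalization which the paper leaves implicit in the sentence ``rotation invariance and fullness imply the $f_j$'s are monomials,'' at the price of redoing by hand, via the lowest-order comparison, what Calabi's Theorem 7 gives directly. Like the paper, you should make explicit that the coordinates in the statement are taken to be (rotation invariant) Bochner coordinates and that $V$ may need to be shrunk so that $f(V)$ lies in the affine chart; with those remarks added, your argument is complete.
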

\begin{proof}
Recall that $Z_0,\dots, Z_N$ are the homogeneous coordinates on $\CP^N$ (see Definition \ref{def1}).
Up to a unitary transformation of $\CP^N$ and by shrinking $V$ if necessary we can assume $f(p)=[1, 0\dots, 0]$ and $f(V)\subset U_0=\{Z_0\neq 0\}$. Since the affine coordinates on $U_0$ are Bochner's coordinates for the Fubini--Study metric $g_{FS}$,   by \cite[Theorem 7]{Cal}, $f$
can be written as:
$$f:V\f \C^N\,,\quad z=(z_1, \dots , z_n)\mapsto (z_1, \dots z_n, f_{n+1}(z), \dots , f_N(z)),$$
where
$$f_j(z)=\sum_{l=n+1}^{\infty}\alpha_{jl}z^{m_l}, \ j=n+1, \dots, N.$$
Since the diastasis function is hereditary (see \cite[Prop. 6]{Cal} ) and that of  $\CP^n$
around the point $[1, 0\dots, 0]$ is given on $U_0$ by $\Phi(z)=\log(1+\sum_{j=1}^N|z_j|^2)$, where $z_j=\frac{Z_j}{Z_0}$,  one gets
$$\dd_0(z)=\log\left(1+\sum_{j=1}^n|z_j|^2+\sum_{j=n+1}^N|f_j(z)|^2\right).$$
The rotation invariance of $\dd_0(z)$ and the fact that $f$ is full  imply that the $f_j$'s are monomials
of $z$ of different degree and formula \eqref{diastbochpi} follows.
\end{proof}
By setting
\begin{equation}\label{eq.xz}
x=(x_1,\mathellipsis,x_n)=(|z_1|^2,\mathellipsis,|z_n|^2)\,,
\end{equation}
the diastasis function $\dd_0$ of a rotation invariant \K metric $g$ can be viewed as a function of the real variables $x_i$.

\smallskip\noindent
From now on we set, with a little abuse of notation,
\begin{equation}\label{eq.Px}
P(x)=P(z(x))\,,
\end{equation}
where $P(z)$ is given by \eqref{eq.Pz} and $x$ by $\eqref{eq.xz}$.

\smallskip\noindent
A diastasis function of a rotation invariant \KE metric satisfies the following lemma.
\begin{lem}\label{realma}
If $g$ is a rotation invariant \KE metric, its diastasis $\dd_0(x)$, where $x$ is given by \eqref{eq.xz},
is a solution of the real Monge-Ampère equation
\begin{equation}\label{eq.real.MAE}
\det\left( \frac{\partial^2 \dd_0}{\partial{x_\alpha}\partial{x_\beta}}x_\alpha+\frac{\partial \dd_0}{\partial{x_\alpha}}\delta_{\alpha\beta}\right)=e^{-\frac{\lambda}{2}\dd_0}
\end{equation}
where $\delta_{\alpha\beta}$ is the Kronecker delta and $\lambda$ is the Einstein constant.
\end{lem}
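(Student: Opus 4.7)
My plan is to recognize \eqref{eq.real.MAE} as the standard K\"ahler-Einstein complex Monge-Amp\`ere equation expressed in the real variables $x_\alpha=|z_\alpha|^2$. The first step is a direct application of the chain rule to $\dd_0(x_1,\ldots,x_n)$ with $x_\alpha=z_\alpha\bar z_\alpha$, which gives
$$\frac{\de^2\dd_0}{\de z_\alpha \de \bar z_\beta}=\bar z_\alpha z_\beta\,\frac{\de^2\dd_0}{\de x_\alpha \de x_\beta}+\delta_{\alpha\beta}\,\frac{\de\dd_0}{\de x_\alpha}.$$
Wherever all $z_\alpha\neq 0$, multiplying the $\alpha$-th row by $z_\alpha$ and the $\beta$-th column by $z_\beta^{-1}$ leaves the determinant unchanged (the two scalings cancel over $\alpha=\beta=1,\ldots,n$) and, using $z_\alpha\bar z_\alpha=x_\alpha$, converts the matrix above exactly into the one on the left-hand side of \eqref{eq.real.MAE}. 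By real analyticity this determinant identity extends to all of $V$, so the left-hand side of \eqref{eq.real.MAE} coincides with $\det\bigl(\tfrac{\de^2\dd_0}{\de z_\alpha \de \bar z_\beta}\bigr)$.

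The second step is to invoke the K\"ahler-Einstein condition $\mathrm{Ric}(\omega)=\lambda\omega$. With the convention $\omega=\tfrac{i}{2}\de\bar\de\dd_0$ and the standard identity $\mathrm{Ric}(\omega)=-i\de\bar\de\log\det\!\bigl(\tfrac{\de^2\dd_0}{\de z_\alpha \de \bar z_\beta}\bigr)$, the Einstein equation becomes
$$\de\bar\de\Bigl(\log\det\!\bigl(\tfrac{\de^2\dd_0}{\de z_\alpha \de \bar z_\beta}\bigr)+\tfrac{\lambda}{2}\dd_0\Bigr)=0,$$
so the bracketed expression is locally the real part $h+\bar h$ of some holomorphic $h$. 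Since the first step already exhibits $\det\bigl(\tfrac{\de^2\dd_0}{\de z_\alpha \de \bar z_\beta}\bigr)$ as a function of $x$ alone, and $\dd_0$ is rotation invariant by hypothesis, $h+\bar h$ is rotation invariant; a holomorphic function whose real part depends only on $|z_\alpha|^2$ is forced to be a constant.

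Finally, evaluating at the origin, the Bochner expansion \eqref{bochnercoordinates} gives $\dd_0(0)=0$ and $\tfrac{\de^2\dd_0}{\de z_\alpha \de\bar z_\beta}(0)=\delta_{\alpha\beta}$, so $\log\det\bigl(\tfrac{\de^2\dd_0}{\de z_\alpha \de \bar z_\beta}\bigr)(0)=0$ and the additive constant vanishes. Combining with the first step yields
$$\det\!\bigl(\tfrac{\de^2\dd_0}{\de x_\alpha \de x_\beta}x_\alpha+\tfrac{\de\dd_0}{\de x_\alpha}\delta_{\alpha\beta}\bigr)=\det\!\bigl(\tfrac{\de^2\dd_0}{\de z_\alpha \de \bar z_\beta}\bigr)=e^{-\frac{\lambda}{2}\dd_0},$$
which is the asserted equation. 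The only mild obstacle is careful bookkeeping of the factor $\tfrac{\lambda}{2}$ produced by the normalization $\omega=\tfrac{i}{2}\de\bar\de\dd_0$; one should also note that the ``real'' matrix in \eqref{eq.real.MAE} is not symmetric, yet its determinant still coincides with that of the Hermitian complex Hessian, as the row/column scaling argument shows.
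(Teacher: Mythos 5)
Your proof is correct, and its skeleton is the same as the paper's: write the Einstein condition as $\lambda\tfrac{i}{2}\de\bar\de \dd_0=-i\de\bar\de\log\det(g_{\alpha\bar\beta})$, use the $\de\bar\de$-lemma to reduce to $\det(g_{\alpha\bar\beta})=e^{-\frac{\lambda}{2}(\dd_0+\varphi+\bar\varphi)}$ with $\varphi$ holomorphic, kill the pluriharmonic ambiguity, and then pass to the variables $x_\alpha=|z_\alpha|^2$. The one step you handle genuinely differently is the elimination of $\varphi+\bar\varphi$: the paper invokes the series-expansion comparison in Bochner's coordinates (citing \cite{note,hulinlambda,salis}), an argument that needs no symmetry assumption, while you exploit the rotation invariance of both $\dd_0$ and $\det(g_{\alpha\bar\beta})$ (the latter made manifest by your explicit chain-rule/row-column-scaling identity) to conclude that the pluriharmonic term is rotation invariant, hence constant, and then use the Bochner normalization $\dd_0(0)=0$, $g_{\alpha\bar\beta}(0)=\delta_{\alpha\beta}$ only to fix that constant to zero. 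Your variant is self-contained and tailored to the hypothesis of the lemma, at the price of being less general; the paper's citation-based step applies to any K\"ahler--Einstein metric in Bochner coordinates. You also make explicit the computation, left implicit in the paper, that the complex Monge--Amp\`ere equation in the coordinates \eqref{eq.xz} is exactly \eqref{eq.real.MAE}; the identity $\frac{\de^2\dd_0}{\de z_\alpha\de\bar z_\beta}=\bar z_\alpha z_\beta\frac{\de^2\dd_0}{\de x_\alpha\de x_\beta}+\delta_{\alpha\beta}\frac{\de \dd_0}{\de x_\alpha}$ and the cancelling row/column scalings are correct, and the extension of the determinant identity from $\{z_\alpha\neq0\}$ to all points by continuity/analyticity is fine.
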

\begin{proof}
A K\"ahler metric $g$ with diastasis function $\dd_0(z)$  is  Einstein  (see e.g. \cite{Mor}) if and only if  there exists $\lambda\in\R$ such that
$$\lambda\frac{i}{2} \de\bar\de \dd_0=-i\de\bar\de\log\det(g_{\alpha\bar\beta}).$$
Hence, by the $\partial\bar\partial$-lemma,  there exists a holomorphic function $\varphi$ such that
\begin{equation}\label{eq.complex.MAE}
\det(g_{\alpha\bar\beta})=e^{-\frac{\lambda}{2}(\dd_0+\varphi+\bar \varphi)}.
\end{equation}
Once Bochner's coordinates are set, by comparing the series expansions of both sides of the previous equation, we get that $\varphi+\bar \varphi$ is forced to be zero (cfr. \cite{note,hulinlambda,salis}). The PDE \eqref{eq.complex.MAE}, in coordinates \eqref{eq.xz}, coincides with \eqref{eq.real.MAE}.
\end{proof}

\begin{lem}\label{bound}
The Einstein constant $\lambda$ of a projectively induced and rotation invariant \KE manifold of dimension $n$ is a positive rational number less than or equal to $2(n+1)$.
\end{lem}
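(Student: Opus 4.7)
My plan is to translate the Kähler--Einstein condition into a single polynomial identity using Lemmas \ref{diastpolin} and \ref{realma}, and then read off all three assertions about $\lambda$ from this identity. After writing $\dd_0 = \log P$ with $P(x) = 1 + \sum_{j=1}^n x_j + \sum_{j>n} a_j x^{m_{h_j}}$ a polynomial of degree $d \geq 1$ with nonnegative coefficients, I set $M_{\alpha\beta} = x_\alpha \partial_{\alpha\beta}\dd_0 + \delta_{\alpha\beta}\partial_\alpha \dd_0$ and substitute $\dd_0 = \log P$ into the Monge--Ampère equation of Lemma \ref{realma}. Clearing denominators produces the polynomial matrix
\[N_{\alpha\beta} := \delta_{\alpha\beta}\, P P_\alpha + x_\alpha(P P_{\alpha\beta} - P_\alpha P_\beta),\]
whose entries have degree at most $2d-1$ and which satisfies $M_{\alpha\beta} = N_{\alpha\beta}/P^2$. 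Lemma \ref{realma} then becomes the polynomial identity
\[\det N = P^{\,2n-\lambda/2}.\]

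Positivity and rationality would both fall out of this identity. The left-hand side is a polynomial of degree at most $n(2d-1) = 2nd - n$, so the right-hand side must also be a polynomial; this forces $2n - \lambda/2 \geq 0$ and, by a degree comparison, $d(2n-\lambda/2) \leq 2nd - n$, i.e.\ $\lambda \geq 2n/d > 0$. Rationality then follows from unique factorization in $\C[x_1,\dots,x_n]$: writing $P = \prod_i p_i^{a_i}$ and equating with $\det N$, each $a_i(2n-\lambda/2)$ must be a nonnegative integer, hence $2n - \lambda/2 \in \q$ and $\lambda \in \q$.

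For the upper bound $\lambda \leq 2(n+1)$ I would expand both sides of the Monge--Ampère equation to first order in $x$ around the origin. Writing $P = 1 + L + H + O(|x|^3)$ with $L = \sum_j x_j$ and $H = \sum_{j,k} h_{jk}\,x_j x_k$ symmetric and $h_{jk} \geq 0$, one finds $\dd_0 = L + (H - L^2/2) + O(|x|^3)$, and then, using $\det M = 1 + \mathrm{tr}(M-I) + O(|x|^2)$,
\[\det M = 1 + \sum_j x_j\!\left[2\sum_\alpha h_{\alpha j} + 2 h_{jj} - (n+1)\right] + O(|x|^2),\]
while $P^{-\lambda/2} = 1 - (\lambda/2) L + O(|x|^2)$. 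Matching coefficients of $x_j$ yields, for each $j$,
\[\tfrac{\lambda}{2} = (n+1) - 2\sum_\alpha h_{\alpha j} - 2 h_{jj},\]
and the non-negativity of all $h_{\alpha j}$ gives $\lambda \leq 2(n+1)$ immediately. The main bookkeeping step is this first-order expansion: one has to carefully sum the contributions from both the $\delta_{\alpha\beta}\partial_\alpha \dd_0$ and $x_\alpha \partial_{\alpha\beta}\dd_0$ parts of $M_{\alpha\beta}$, since a misplaced factor would spoil the sharp bound.
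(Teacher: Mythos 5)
Your proof is correct, and for two of the three assertions it is the paper's own argument: after writing $\dd_0=\log P$ via Lemma \ref{diastpolin} and clearing denominators in \eqref{eq.real.MAE}, both you and the paper obtain a polynomial identity and deduce $\lambda\geq 2n/\deg P>0$ and $\lambda\in\q$ by comparing degrees (your unique-factorization remark is actually superfluous and slightly informal, since the degree identity $d\,(2n-\lambda/2)=\deg\det N\in\Z$ already yields rationality). The genuine divergence is in the upper bound. The paper notes that $\bigl(x_\alpha\frac{\partial P}{\partial x_\alpha}\frac{\partial P}{\partial x_\beta}\bigr)_{\alpha\beta}$ has rank one, so by multilinearity of the determinant $\det N$ is divisible by $P^{n-1}$; hence $\D_n(P)=P^{\,n+1-\lambda/2}$ has polynomial left-hand side and the exponent must be nonnegative, giving $\lambda\leq 2(n+1)$ --- note that your undivided identity $\det N=P^{\,2n-\lambda/2}$ alone would only give $\lambda\leq 4n$, which is why a separate argument is needed. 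You supply it by expanding $\det M=P^{-\lambda/2}$ to first order at the origin and using that the quadratic coefficients of $P$ are nonnegative (again Lemma \ref{diastpolin}); your bookkeeping is right, and the resulting relation $\tfrac{\lambda}{2}=(n+1)-2h_{jj}-2\sum_\alpha h_{\alpha j}$ for every $j$ indeed gives $\lambda\leq 2(n+1)$. Comparing the two: the paper's divisibility trick is the step that produces the operator $\D_n$ and the polynomial equation \eqref{MA*} on which the rest of the proof (Lemma \ref{pt}, Proposition \ref{newmain}) is built, so it is needed later anyway; your Taylor-coefficient argument instead exploits the positivity coming from projective induction and yields finer information, namely an explicit expression of $\lambda$ in terms of the quadratic part of $P$ (in particular $2h_{jj}+2\sum_\alpha h_{\alpha j}$ is independent of $j$, and $\lambda=2(n+1)$ forces the quadratic part of $P$ to vanish).
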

\begin{proof}
By Lemma \ref{diastpolin}, the diastasis of a rotation invariant and projectively induced \K metric can be written as $\dd_0(x)=\log(P(x))$, where $P$ is a polynomial of type \eqref{eq.Px}.
By Lemma \ref{realma}, we have
\begin{equation}\label{MA*}
\D_n(P)=P^{-\frac{\lambda}{2}+n+1},
\end{equation}
where
we denote by $\D_n$ the following differential operator
\begin{equation*}\label{operator}
\D_n(P)=\frac{\det\left[\left(P \frac{\partial^2 P}{\partial{x_\alpha}\partial{x_\beta}}-\frac{\partial P}{\partial{x_\alpha}}\frac{\partial P}{\partial{x_\beta}}\right)x_\alpha+P\frac{\partial P}{\partial{x_\alpha}} \delta_{\alpha\beta}\right]_{1\leq\alpha,\beta\leq n}}{P^{n-1}}.
\end{equation*}
By multilinearity of determinants and by considering that
$\left(\frac{\partial P}{\partial{x_\alpha}}\frac{\partial P}{\partial{x_\beta}}x_\alpha\right)_{1\leq \alpha,\beta\leq n}$
is a rank-1 matrix, we get that left side of \eqref{MA*}  is a polynomial. Therefore $\lambda$ needs to be a rational number satisfying the inequality  $-\frac{\lambda}{2}+n+1\geq 0$. Then we obtain the upper bound for the Einstein constant $\lambda$.
Furthermore, by comparing the degrees of both sides of \eqref{MA*}, we get  $\lambda\geq 2\frac{n}{\deg P }>0$.
\end{proof}
\begin{rem}
It is worth pointing out that Conjectures \ref{conj1} and \ref{subconj} are of local nature, i.e. there are no the topological assumptions on projectively induced manifolds and the  immersions are not required to be injective. Moreover, we notice that  Lemma \ref{bound} has  important topological  consequences supporting Conjecture \ref{subconj}, namely \emph{a rotation invariant and projectively induced \KE manifold is an open subset of a complete, compact and simply-connected \KE manifold globally embedded into a finite dimensional complex projective space}. Indeed, every \KE manifold embedded into a (possibly infinite dimensional) complex projective space can be extended to a \emph{complete} \KE manifold  $M$ (see \cite{hulin}). Since the Einstein constant of $M$ is positive  by Lemma \ref{bound}, then $M$ is \emph{compact}  by Myers' theorem. Moreover, $M$ is a \emph{simply connected} by a well-known theorem of Kobayashi \cite{koricci} and every local immersion of a simply-connected manifold into a complex space form can be extended to a global one (see \cite{Cal}).
\end{rem}

{Now, let $\lambda$ be the Einstein constant of a projectively induced and rotation invariant \KE manifold of dimension $n$. In view of Lemma \ref{bound},}
$\lambda=2 \frac{s}{q}$, where $\gcd(s,q)=1$. Since $\gcd(2nq,s)=1$, a polynomial solution of type \eqref{eq.Px} to \eqref{MA*}, is forced to be the $q$-th power of a polynomial $R(x)$. After the change of variables $ x=\frac{\tilde x}{q}$, we easily check that $R(\tilde x)$ is a solution for \eqref{MA*}  with $q=1$. Vice versa, every solution $R(\tilde x)$  of \eqref{MA*} for $q=1$ gives rise to a solution of \eqref{MA*} for $q\neq1$ by taking the  $q$-th power of $R(\tilde x)$ and by considering the same changing of variables  $\tilde x=q x$. Hence,  we are going  to study from now on the real Monge-Ampère equations \eqref{MA*} just when $q=1$.

By restricting  \eqref{MA*} to the case $n=2$, by recalling that, for our purposes, we consider only solutions 
belonging to the polynomial class \eqref{eq.Px} and that the upper bound for the above parameter $s$ can be obtained by Lemma \ref{bound},
 we have that the statement of Theorem \ref{mainth} can be get  by proving the following proposition.
 \begin{prop} \label{newmain}
  The only solutions of  type
\begin{equation}\label{soltype}
P(x)=P(x_1,x_2)=1+x_1+x_2+\xi(x_1,x_2),
\end{equation}
where $\xi$ is a polynomial with positive coefficients and no terms of degree less than 2,  to the real Monge-Ampère equation
\begin{equation}\label{MAn2}
\D_2(P)=P^{3-s}
\end{equation}
for some integer $s\in\{1,2,3\}$, are
\begin{equation}\label{solution}\begin{cases}
1+x_1+x_2,& \text{when }s=3;\\
(1+x_1)(1+x_2)& \text{when }s=2;\\
(1+\frac{x_1+x_2}{3})^3 \text{ and } (1+\frac{x_1}{2})^2(1+\frac{x_2}{2})^2& \text{when } s=1.\\
\end{cases}\end{equation}
\end{prop}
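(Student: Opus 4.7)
The plan is to view \eqref{MAn2} as a Cauchy problem in $x_2$ with initial data $A(x_1) := P(x_1, 0)$ and $B(x_1) := \partial_{x_2} P(x_1, 0)$, to classify the admissible initial data, and to verify that each admissible datum extends uniquely (when it extends at all) to one of the four polynomials listed in \eqref{solution}. Denote by $M=(M_{\alpha\beta})$ the matrix appearing inside $\det$ in the definition of $\D_2$, so $\D_2(P)=\det M/P$. Restricting \eqref{MAn2} to the slice $\{x_2=0\}$ gives an immediate reduction: since $M_{21}$ carries an overall factor of $x_2$, $M_{21}(x_1,0)=0$; moreover $M_{11}(x_1,0)=\D_1(A)(x_1)$ and $M_{22}(x_1,0)=A(x_1)B(x_1)$, so \eqref{MAn2} on the slice reduces to the single algebraic identity
\[
B(x_1)\,\D_1(A)(x_1)\;=\;A(x_1)^{3-s}.
\]
Since \eqref{soltype} forces $A(0)=A'(0)=B(0)=1$ and non-negative coefficients for $A$ and $B$, this is a divisibility constraint $\D_1(A)\mid A^{3-s}$ in $\R[x_1]$.

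To classify admissible $(A,B)$, factor $A=\prod_i (1+\alpha_i x_1)^{n_i}$ over $\C$. With $\varphi=\log A$ one computes $\D_1(A)=A^2(\varphi'+\varphi'' x_1)=A^2\sum_i n_i\alpha_i(1+\alpha_i x_1)^{-2}$, whose numerator $N(x_1)$ (over the common denominator $\prod_i(1+\alpha_i x_1)^2$) has degree $2(k-1)$, $k$ being the number of distinct roots of $A$. Evaluating at each root $-1/\alpha_i$ gives $N(-1/\alpha_i)=n_i\alpha_i\prod_{j\neq i}(1-\alpha_j/\alpha_i)^2\neq 0$, so $N$ shares no irreducible factor with $A$; the divisibility then forces $N$ to be a non-zero constant, whence $k=1$. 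Thus $A=(1+x_1/m)^m$ (the normalization $A'(0)=1$ fixes $m\alpha=1$), $\D_1(A)=(1+x_1/m)^{2m-2}$, and the divisibility collapses to $m(s-1)\leq 2$: so $m=1$ for $s=3$; $m\in\{1,2\}$ for $s=2$; any $m\geq 1$ for $s=1$. In each case $B=A^{3-s}/\D_1(A)=(1+x_1/m)^{m(1-s)+2}$.

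For uniqueness, observe that $M_{22}$ is the only entry of $M$ involving $P_{22}$, and does so linearly; hence differentiating \eqref{MAn2} once in $x_2$ and evaluating at $x_2=0$ determines $P_{22}(x_1,0)$ from $(A,B)$, and further differentiations recover $\partial_{x_2}^k P(x_1,0)$ for every $k\geq 2$. Thus the formal Cauchy problem in $x_2$ admits at most one solution for each $(A,B)$; since the four polynomials in \eqref{solution} realise the admissible pairs $(1+x_1,1)$, $(1+x_1,1+x_1)$, $((1+x_1/3)^3,(1+x_1/3)^2)$, $((1+x_1/2)^2,(1+x_1/2)^2)$, uniqueness identifies them.

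The main obstacle is eliminating the remaining admissible data --- $((1+x_1/2)^2,1)$ at $s=2$ and $(A,B)=((1+x_1/m)^m,(1+x_1/m)^2)$ with $m=1$ or $m\geq 4$ at $s=1$ --- by showing that they fail to extend to polynomial solutions. For each such datum I would compute $C_2(x_1):=\tfrac{1}{2}\partial_{x_2}^2 P(x_1,0)$ from the once-$x_2$-differentiated version of \eqref{MAn2} and exhibit it as non-polynomial; for example, for $((1+x_1/2)^2,1)$ at $s=2$ a direct calculation gives $C_2(x_1)=1/[4(1+x_1/2)^2]$, ruling that datum out. The $s=1$ outliers require either pushing this computation to higher order or exploiting the dual Cauchy problem at $x_1=0$, whose initial datum $\tilde A(x_2)=P(0,x_2)$ must by the same analysis be $(1+x_2/\tilde m)^{\tilde m}$, to derive a consistency obstruction on the mixed Taylor coefficients of $P$. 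Carrying this out uniformly for all excluded $m$ is where the bulk of the technical work lies.
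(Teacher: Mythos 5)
Your overall architecture --- restrict \eqref{MAn2} to the axis $x_2=0$ to get $B\,\D_1(A)=A^{3-s}$, classify the admissible data $A=(1+x_1/m)^m$, $B=(1+x_1/m)^{m(1-s)+2}$, and then show the formal Cauchy problem in $x_2$ determines $\partial_{x_2}^{h}P(x_1,0)$ recursively --- is sound and runs parallel to the paper's Lemma 2.5 and its final uniqueness lemma. But there is a genuine gap exactly where you say "the bulk of the technical work lies": you never eliminate the spurious admissible data, namely $m=2$ at $s=2$ and $m=1$ or $m\geq 4$ at $s=1$, and the latter is an \emph{infinite} family, so your proposed case-by-case computation of $C_2(x_1)=\tfrac12\partial_{x_2}^2P(x_1,0)$ (carried out, at best, for the single case $s=2$, $m=2$) does not amount to a proof, nor is it clear that non-polynomiality already shows up at second order for every excluded $m$. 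The paper disposes of all outliers at once by a finite jet computation at the origin that uses \emph{both} axes simultaneously: by the axis analysis $P(0,x_2)=(1+x_2/h)^h$ with $\partial_{x_1}P(0,x_2)=(1+x_2/h)^{h(1-s)+2}$, the mixed coefficient $\partial^2_{x_1x_2}P(0,0)=(1-s)+2/k=(1-s)+2/h$ forces $k=h$, so $P$ has the explicit form \eqref{soln2} up to a remainder divisible by $x_1^2x_2^2$; substituting \eqref{soln2} into \eqref{MAn2}, applying $\partial^2/\partial x_1\partial x_2$ and evaluating at $(0,0)$ yields the Diophantine equation $s^2k^2-5sk+6=0$, i.e.\ $sk\in\{2,3\}$, which kills precisely the outliers. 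This is the consistency obstruction "on the mixed Taylor coefficients" that you gesture at in your last sentence but do not derive; without it the classification \eqref{solution} is not established.

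A secondary, fillable weak point: from $\gcd(N,A)=1$ and the divisibility you conclude that $N$ is a nonzero constant, "whence $k=1$". That last inference is not immediate: constancy of $N(x)=\sum_i n_i\alpha_i\prod_{j\neq i}(1+\alpha_j x)^2$ is a system of vanishing conditions on the multiplicities $n_i$, and a naive degree count can fail because the leading coefficient of $N$ may vanish when $A$ has complex roots (e.g. when the subleading coefficient of $A$ vanishes). The paper proves this step by viewing the vanishing of the top coefficients as a homogeneous linear system in the multiplicities whose determinant, $R!\prod_i r_i\prod_{i<j}(r_i-r_j)$, is nonzero for distinct nonzero roots, forcing the contradiction $n_i=0$; you would need to supply an argument of this kind.
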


\subsection{Proof of Proposition \ref{newmain}}\label{sec.proof.prop}
As a first step towards the proof of Proposition \ref{newmain}, we characterize  the initial conditions that an arbitrary polynomial solution  of type \eqref{soltype}  to the Monge-Ampère equation \eqref{MAn2} needs to satisfy on the coordinate axes. These conditions will be given by the Corollary \ref{Cauchydata} of the following lemma, that holds true for  any dimension.
\begin{lem}\label{pt}
The restriction $p$ on a coordinate axis  of a polynomial solution of type \eqref{eq.Px} to the Monge-Ampère equation \eqref{MA*} reads as:
\begin{equation}\label{cauchycond}
\begin{cases}
 p(t)=1+t,& \text{when }s=n+1;\\ 
 p(t)=\left(1+\frac{t}{k}\right)^k,\text{ with } k\in\{1, 2\}& \text{when }s=n;\\
  p(t)=\left(1+\frac{t}{k}\right)^k,\text{ with } k\in\Z^{+} & \text{when }1\leq s \leq n-1.\\
\end{cases}
\end{equation}
\end{lem}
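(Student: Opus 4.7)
The strategy is to restrict equation \eqref{MA*} to the coordinate axis $x_2=\cdots=x_n=0$ and distill from it a single polynomial identity involving only $p(t):=P(t,0,\ldots,0)$. I begin by evaluating the matrix
\[
M_{\alpha\beta}:=\Bigl(P\tfrac{\partial^2 P}{\partial x_\alpha\partial x_\beta}-\tfrac{\partial P}{\partial x_\alpha}\tfrac{\partial P}{\partial x_\beta}\Bigr)x_\alpha+P\tfrac{\partial P}{\partial x_\alpha}\delta_{\alpha\beta}
\]
(whose determinant equals $P^{n-1}\D_n(P)=P^{2n-s}$ by \eqref{MA*}) on the axis. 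The factor $x_\alpha$ kills every entry with $\alpha\geq 2$ and $\beta\neq\alpha$, and $M_{\alpha 1}|_{\text{axis}}=0$ for $\alpha\geq 2$ for the same reason, so the matrix becomes upper triangular. Its diagonal entries are
\[
M_{11}|_{\text{axis}}=(pp''-(p')^2)t+pp',\qquad M_{\alpha\alpha}|_{\text{axis}}=p(t)\,g_\alpha(t)\ (\alpha\geq 2),
\]
where $g_\alpha(t):=(\partial_\alpha P)|_{\text{axis}}$ is a polynomial with nonnegative coefficients and $g_\alpha(0)=1$. Taking the determinant and dividing by $p^{n-1}$ yields the fundamental identity
\[
M_{11}(t)\,\prod_{\alpha=2}^{n}g_\alpha(t)=p(t)^{n+1-s}.
\]

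I then view this as a polynomial divisibility in $\C[t]$ and analyze the vanishing orders at the roots of $p$. For any root $\alpha$ of $p$ (necessarily nonzero since $p(0)=1$) one has $M_{11}(\alpha)=-\alpha(p'(\alpha))^2$, so $M_{11}$ never vanishes at a simple root of $p$; a local expansion $p(t)=(t-\alpha)^m r(t)$ with $r(\alpha)\neq 0$ and $m\geq 2$ shows that $M_{11}$ vanishes at such a multiple root to order exactly $2m-2$. Writing $p=c_d\prod_i(t-\alpha_i)^{m_i}$ with $d=\deg p$ and $r$ distinct roots, the divisibility $M_{11}\mid p^{n+1-s}$ forces $M_{11}$ to have no roots outside $\{\alpha_i\}$, and hence
\[
\deg M_{11}=\sum_{i:\,m_i\geq 2}(2m_i-2)=2d-2r.
\]
On the other hand, a direct computation of the leading terms of $M_{11}$ shows that the coefficient of $t^{2d-1}$ vanishes identically, while the coefficient of $t^{2d-2}$ equals $c_{d-1}c_d$; whenever $c_{d-1}>0$ this pins $\deg M_{11}=2d-2$, forcing $r=1$. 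A single root of multiplicity $d$, together with the normalizations $p(0)=1$ and $p'(0)=1$, forces $p(t)=(1+t/k)^k$ with $k=d$.

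Substituting $p=(1+t/k)^k$ back into the vanishing-order inequality at the unique root gives $2k-2\leq k(n+1-s)$, i.e.\ $k(s-n+1)\leq 2$, which separates into exactly the three cases of the statement ($k=1$ for $s=n+1$; $k\in\{1,2\}$ for $s=n$; $k\in\Z^+$ for $1\leq s\leq n-1$). The main obstacle is handling the degenerate configurations in which $c_{d-1}=0$ and $\deg M_{11}<2d-2$: parity excludes odd values of $\deg M_{11}$ immediately (since $2d-2r$ is always even), while the residual case $r=2$ with $c_{d-1}=c_{d-2}=0$ is ruled out via the Vieta-type identities $\sum m_i\alpha_i=0$ and $e_2(\{\alpha_i\text{ w/ mult}\})=0$, which combined yield $\alpha_1^2 m_1 d/(2m_2)=0$, a contradiction; the cases $r\geq 3$ are handled by iterating the same coefficient computation and exploiting the nonnegativity of $P$'s coefficients together with $p'(0)=1$.
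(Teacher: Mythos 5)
Your reduction to the one--variable identity $M_{11}(t)\prod_{\alpha\geq 2}g_\alpha(t)=p(t)^{n+1-s}$ is correct and coincides with the paper's restricted equation \eqref{MArestr}; likewise the exact vanishing order $2m_i-2$ of $M_{11}$ at a root of multiplicity $m_i$ (hence $M_{11}=c\prod_i(t-\alpha_i)^{2m_i-2}$ and $\deg M_{11}=2d-2r$, which is equivalent to the paper's observation that the factor $\sum_i k_ir_i\prod_{j\neq i}(t+r_j)^2$ must be constant), the leading--coefficient computation (the coefficient of $t^{2d-1}$ vanishes, that of $t^{2d-2}$ is $c_dc_{d-1}$), the $r=2$ Vieta computation, and the final inequality $2k-2\leq k(n+1-s)$ reproducing the three cases of \eqref{cauchycond} all check out.

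The genuine gap is the exclusion of $r\geq 3$ distinct roots, which is exactly where the paper's key idea lies. Your one--clause sketch (``iterating the same coefficient computation and exploiting the nonnegativity of $P$'s coefficients together with $p'(0)=1$'') does not suffice as stated. What the iteration gives (using that every coefficient of $M_{11}$ has the form $\sum_{j<k,\ j+k=m}c_jc_k(k-j)^2$ with all $c_j\geq 0$) is $c_{d-1}=\dots=c_{d-2r+2}=0$, i.e. $e_1=\dots=e_{2r-2}=0$ for the roots counted with multiplicity; but when $d>2r-1$ (large multiplicities) these vanishing coefficients never reach $c_1$, so $p'(0)=1$ produces no contradiction, and no generalization of your $r=2$ identity is actually exhibited. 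Closing the gap requires a further argument: for instance, Newton's identities turn $e_1=\dots=e_{2r-2}=0$ into the power--sum relations $\sum_i m_i\alpha_i^{\,j}=0$ for $j=1,\dots,r$ (using $2r-2\geq r$), and the nonvanishing of the determinant $\prod_i\alpha_i\prod_{i<j}(\alpha_j-\alpha_i)$ then forces all $m_i=0$, a contradiction. This is precisely the mechanism of the paper's proof, which reads \eqref{eq:system.Filippo} as a homogeneous linear system in the multiplicities $k_1,\dots,k_R$ with coefficient determinant $R!\,\prod_i r_i\prod_{i<j}(r_i-r_j)\neq 0$. Once you add this ingredient your argument is complete (and it subsumes your separate $r=2$ computation); without it, the degenerate configurations with $c_{d-1}=0$ and three or more distinct roots remain unexcluded.
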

\begin{proof}
Let $p$ be the restriction on the $i$-th coordinate axis (i.e. the line $x_j=0,\text{ for } j\neq i$) of  a polynomial solution $P$ of type \eqref{eq.Px} to the Monge-Ampère equation \eqref{MA*}. Hence, we have that
\begin{equation}\label{MArestr}
\D_1\left(p(t)\right) q(t) =p(t)^{n-s+1},
\end{equation}
where the polynomial $q(t)$ is the restriction on the $i$-th coordinate axis of  $\prod_{j\neq i}\frac{\de P}{\de x_j}$.
Let $\{-r_1,\mathellipsis,-r_R\}$ be the (possibly complex) distinct roots  of  $p$, namely\footnote{Notice that the constant term of $p(x)$ and $q(x)$ are fixed to be equal to 1 by the definition of \eqref{eq.Px}.}
$$p(t)=\frac{1}{\prod_{i=1}^R r_i^{k_i}}\prod_{i=1}^R (t+ r_i)^{k_i}.$$
Considering that
\begin{equation*}
 \D_1\left( \prod_{i=1}^R (t+ r_i)^{k_i}\right)= \prod_{i=1}^R (t+ r_i)^{2k_i-2}\sum_{i=1}^R k_i r_i  \prod_{\substack{j=1\\ j\neq i}}^R (t+ r_j)^{2},
\end{equation*}
the equation \eqref{MArestr} can be written as
\begin{equation*}
\left( \sum_{i=1}^R k_i r_i  \prod_{\substack{j=1\\ j\neq i}}^R (t+ r_j)^{2}\right) q(x) =\frac{1}{\prod_{i=1}^R r_i^{k_i(n-s-1)}}\prod_{i=1}^R (t+ r_i)^{k_i(n-s-1)+2}.
\end{equation*}
Therefore we get
 \begin{equation}\label{cauchycond2}
 q(t) =\frac{1}{\prod_{i=1}^R r_i^{k_i(n-s-1)+2}}\prod_{i=1}^R (t+ r_i)^{k_i(n-s-1)+2}
 \end{equation}
 and
\begin{equation}\label{eq:system.Filippo}
 \sum_{i=1}^R k_i r_i  \prod_{\substack{j=1\\ j\neq i}}^R (t+ r_j)^{2}-\prod_{i=1}^R  r_i^{2}=0.
\end{equation}
Let now consider \eqref{eq:system.Filippo} as a linear system in the variables $k_1,\mathellipsis,k_R$. If $R=1$, such system consists of just one equation, which  has a unique solution: $$k_1=r_1.$$
If $R\geq 2$ it cannot be compatible for any $t$. Indeed, being the left hand side of \eqref{eq:system.Filippo} a polynomial in $t$ of degree $2R-2$,  in particular its first $R$ higher order coefficients have to vanish. Therefore, $k_1,\mathellipsis,k_R$ need to satisfy a homogeneous system, whose  determinant of the coefficients matrix can be easily computed:
$$
R!\prod_{i=1}^R r_i\prod_{1\leq i<j\leq R}(r_i-r_j).
$$
In view of our hypotheses, such determinant is always different from zero. Therefore  our system  admits only the trivial solution, leading to a contradiction, since $k_i$  represent the multiplicity of a root of a polynomial, so  they should be positive.
\end{proof}
\begin{corol}\label{Cauchydata}
Any arbitrary polynomial solution of type \eqref{soltype} to the Monge-Ampère equation  \eqref{MAn2} satisfies  one and only one of the following initial conditions on the coordinate axis $x_2=0$:
\begin{equation}\label{Cauchy}
\begin{cases}
P(x_1,0)=1+x_1,\ \frac{\de P}{\de x_2}(x_1,0)=1 & \text{when }s=3;\\
P(x_1,0)=1+x_1,\ \frac{\de P}{\de x_2}(x_1,0)=1+x_1& \text{when }s=2;\\
P(x_1,0)=\left(1+\frac{x_1}{2}\right)^2,\ \frac{\de P}{\de x_2}(x_1,0)=\left(1+\frac{x_1}{2}\right)^2 &\qquad \text{ or } \\
P(x_1,0)=\left(1+\frac{x_1}{3}\right)^3,\ \frac{\de P}{\de x_2}(x_1,0)=\left(1+\frac{x_1}{3}\right)^2& \text{when } s=1.\\
\end{cases}\end{equation}
\end{corol}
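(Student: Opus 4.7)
The plan is to combine Lemma~\ref{pt} with a single differentiation of the Monge--Amp\`ere equation~\eqref{MAn2} across the axis $\{x_2=0\}$. First I specialise Lemma~\ref{pt} to $n=2$ on that axis; together with formula~\eqref{cauchycond2} extracted from its proof, every polynomial solution of type~\eqref{soltype} must satisfy
\[
p(x_1):=P(x_1,0)=(1+x_1/k)^{k},\qquad q(x_1):=\partial_{x_2}P(x_1,0)=(1+x_1/k)^{m},
\]
with $m=k(1-s)+2$, and $k\in\{1\}$ if $s=3$, $k\in\{1,2\}$ if $s=2$, $k\in\Z^{+}$ if $s=1$. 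The case $s=3,\,k=1$ already produces the first initial condition of the corollary, so the remaining work is to exclude $k=2$ when $s=2$ and $k\in\{1\}\cup\{k\geq 4\}$ when $s=1$.

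For this I would differentiate the equivalent form $\det A=P^{4-s}$ once in $x_2$ and evaluate at $x_2=0$. The Taylor coefficients $c_{a,2}$ of $x_1^{a}x_2^{2}$ in $P$ enter through $A_{22}$; of these, $c_{0,2}$ and $c_{1,2}$ are pinned down by applying Lemma~\ref{pt} and \eqref{cauchycond2} to the transverse axis $\{x_1=0\}$, which produces $P(0,x_2)=(1+x_2/k')^{k'}$ and $\partial_{x_1}P(0,x_2)=(1+x_2/k')^{m'}$; the equality of mixed derivatives $q'(0)=q_2'(0)$ then forces $k'=k$, whence $c_{0,2}=\tfrac{k-1}{2k}$ and $c_{1,2}=\tfrac{m(m-1)}{2k^{2}}$. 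Using the identities $A_{21}|_{x_2=0}=0$, $A_{11}|_{x_2=0}=(1+x_1/k)^{2k-2}$, $A_{22}|_{x_2=0}=(1+x_1/k)^{k+m}$ (all three produced inside the proof of Lemma~\ref{pt}), a direct simplification of the differentiated equation reduces to the polynomial identity
\[
\frac{k-1}{k}+\frac{m(m-1)}{k^{2}}\,x_1+2\sum_{a\geq 2}c_{a,2}\,x_1^{a} \;=\; \frac{k-1}{k}\bigl(1+x_1/k\bigr)^{\mu(s,k)},
\]
with exponent $\mu(1,k)=4-k$ and $\mu(2,k)=4-3k$.

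The corollary then drops out of this identity. Since $P$ is polynomial the left-hand side is polynomial, so the right-hand side must be too: this discards $s=2,\,k=2$ (where $\mu=-2$) and $s=1,\,k\geq 5$ (where $\mu<0$). Matching the $x_1$-coefficients of the two sides yields the Diophantine equation $(k-1)(4-k)=m(m-1)=2$ for $s=1$, whose only positive-integer solutions are $k\in\{2,3\}$, thereby excluding $k=1$ (linear coefficients $2$ on the left, $0$ on the right) and $k=4$ ($1/8$ and $0$ respectively). The four surviving pairs $(s,k)\in\{(3,1),(2,1),(1,2),(1,3)\}$ are exactly the initial conditions stated in the corollary, and since the four polynomials $p(x_1)$ are pairwise distinct, the ``one and only one'' clause is automatic. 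The main obstacle is that $\{x_2=0\}$ is characteristic for the Monge--Amp\`ere operator---the coefficient $\partial A_{22}/\partial P_{x_2 x_2}$ equals $Px_2$, which vanishes on the axis---so the Cauchy data $(p,q)$ alone do not determine $P$; it is this degeneracy which forces both the extra differentiation of the equation in $x_2$ and the simultaneous use of Lemma~\ref{pt} on the transverse axis.
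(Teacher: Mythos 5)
Your proposal is correct, and its skeleton is the same as the paper's: both arguments start from Lemma \ref{pt} together with \eqref{cauchycond2} applied to the two coordinate axes, force the two exponents to coincide ($k'=k$, in the paper $h=k$) via the mixed second derivative at the origin, and then squeeze exactly one more Taylor-coefficient condition out of \eqref{MAn2}, which becomes a Diophantine constraint on $(s,k)$ whose only admissible solutions are the four cases of \eqref{Cauchy}. The only real divergence is how that extra condition is extracted. The paper writes the general ansatz \eqref{soln2} compatible with the axis data, substitutes it into \eqref{MAn2}, applies $\frac{\partial^2}{\partial x_1\partial x_2}$ and evaluates at the origin, obtaining $s^2k^2-5sk+6=0$; you instead differentiate $\det A=P^{4-s}$ once in $x_2$ and restrict to the whole axis, arriving at $\frac{\partial^2 P}{\partial x_2^2}(x_1,0)=\frac{k-1}{k}\left(1+\frac{x_1}{k}\right)^{k(1-2s)+4}$ — I verified this identity, including the prefactor and your exponents $\mu(1,k)=4-k$, $\mu(2,k)=4-3k$ — and then compare its linear coefficient with $2c_{1,2}=\frac{m(m-1)}{k^2}$ coming from the transverse axis; that comparison, i.e.\ $m(m-1)=(k-1)\bigl(k(1-2s)+4\bigr)$, is algebraically identical to the paper's equation $s^2k^2-5sk+6=0$, and your polynomiality argument correctly eliminates the cases with $\mu<0$ (notably $s=2$, $k=2$ and $s=1$, $k\geq 5$), with $k=1,4$ for $s=1$ excluded by the linear coefficient. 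Your variant costs a somewhat longer computation along the axis but buys the explicit value of $\partial^2_{x_2}P$ on $\{x_2=0\}$, which is close in spirit to the recursion the paper later uses in its uniqueness lemma; the paper's pointwise evaluation at the origin is shorter but less transparent. The only cosmetic caveat is that your ``direct simplification'' step is asserted rather than displayed, but the stated outcome is exact, so nothing essential is missing.
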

\begin{proof}
Let $P$ be  a solution  of type \eqref{soltype} to \eqref{MAn2}. By Lemma \ref{pt}, $P(x_1,0)=\left(1+\frac{x_1}{k} \right)^k$ and $P(0, x_2)=\left(1+\frac{x_2}{h} \right)^h$ for suitable $k$, $h\in\Z^+$. Moreover, by  \eqref{cauchycond2}, $\frac{\partial P}{\partial x_2}(x_1,0)=\left(1+\frac{x_1}{k} \right)^{k(1-s)+2}$  and $\frac{\partial P}{\partial x_1}(0,x_2)=\left(1+\frac{x_2}{h} \right)^{h(1-s)+2}$. By computing $\frac{\partial^2 P}{\partial x_1\partial x_2}(0,0)$, we get $k=h$. Therefore, $P$ reads as:
\begin{multline}\label{soln2}
\left(1+\frac{x_1}{k}\right)^k+\left(1+\frac{x_2}{k}\right)^k-1+x_1\left(1+\frac{x_2}{k}\right)^{k(1-s)+2}+x_2\left(1+\frac{x_1}{k}\right)^{k(1-s)+2}\\
-x_1-x_2-\left(1-s+\frac{2}{k} \right)x_1x_2+x_1^2x_2^2\ \eta(x_1,x_2),
\end{multline}
where $\eta$ is a polynomial.
By putting \eqref{soln2} in  \eqref{MAn2}, by differentiating both sides of the equation by $\frac{\partial^2}{\partial x_1\partial x_2}$ and by evaluating at $(0,0)$,
we straightforwardly get the following Diophantine equation
$$s^2k^2-5sk+6=0.$$
Therefore,  by solving the previous equation,  we easily  get our statement.\end{proof}

Since each solution \eqref{solution} satisfies the correspondent initial condition  \eqref{Cauchy}, we   conclude the proof of Proposition \ref{newmain} by showing that
\begin{lem}
If there exists a  polynomial solution to \eqref{MAn2}  satisfying an initial condition of type \eqref{Cauchy}, then it is unique.
\end{lem}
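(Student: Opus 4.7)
The plan is to reinterpret \eqref{MAn2} as a formal Cauchy problem in the variable $x_{2}$, with data on the axis $x_{2}=0$ prescribed by the chosen line of \eqref{Cauchy}. Writing an arbitrary polynomial solution of type \eqref{soltype} as
\[
P(x_{1},x_{2})=\sum_{k\ge 0} P_{k}(x_{1})\,x_{2}^{k},
\]
I identify $P_{0}(x_{1})$ and $P_{1}(x_{1})$ with the two functions appearing in the chosen line of \eqref{Cauchy}, and I aim to show that for every $k\ge 2$ the coefficient $P_{k}(x_{1})$ is uniquely determined by $P_{0},\ldots,P_{k-1}$. The uniqueness of the polynomial solution then follows immediately.

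The core step is to rewrite \eqref{MAn2} in the polynomial form $\det M=P^{4-s}$, where $M$ denotes the $2\times 2$ matrix appearing in the definition of $\D_{2}$, and to expand each entry in powers of $x_{2}$. The decisive structural observation is that $M_{21}$ carries an explicit factor of $x_{2}$, so that $M_{21}|_{x_{2}=0}=0$. A combinatorial inspection of the entries then shows that, writing
\[
[\det M]_{k-1}=\sum_{i+j=k-1}[M_{11}]_{i}[M_{22}]_{j}-\sum_{i+j=k-1}[M_{12}]_{i}[M_{21}]_{j},
\]
the only summand carrying $P_{k}$ is $[M_{11}]_{0}[M_{22}]_{k-1}$: indeed $M_{11}$ contains $P_{k}$ only from order $x_{2}^{k}$, the product $M_{12}M_{21}$ has no $P_{k}$-dependence at order $x_{2}^{k-1}$ thanks to the factor $x_{2}$ in $M_{21}$, and $[M_{22}]_{k-1}$ contains the $P_{k}$-contribution $k(k-1)P_{0}P_{k}$ coming from $(PP_{x_{2}x_{2}}-P_{x_{2}}^{2})x_{2}$ together with $kP_{0}P_{k}$ coming from $PP_{x_{2}}$, totalling $k^{2}P_{0}P_{k}$. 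Since the right-hand side $P^{4-s}$ involves $P_{k}$ only starting from order $x_{2}^{k}$, matching the $x_2^{k-1}$-coefficients on both sides of $\det M=P^{4-s}$ yields a linear relation
\[
k^{2}\,P_{0}(x_{1})\cdot M_{11}\big|_{x_{2}=0}\cdot P_{k}(x_{1})\;=\;G_{k}\bigl(P_{0},\ldots,P_{k-1}\bigr),
\]
with $G_{k}$ a polynomial expression in the lower coefficients and their $x_{1}$-derivatives.

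It remains to verify that the prefactor $P_{0}\cdot M_{11}|_{x_{2}=0}$ is a nonzero polynomial in $x_{1}$ for each of the four initial data in \eqref{Cauchy}. In every case $P_{0}=(1+x_{1}/k_{0})^{k_{0}}$ with $k_{0}\in\{1,2,3\}$, and a direct computation from
\[
M_{11}\big|_{x_{2}=0}=\bigl(P_{0}P_{0}''-(P_{0}')^{2}\bigr)x_{1}+P_{0}P_{0}'
\]
gives $M_{11}|_{x_{2}=0}=(1+x_{1}/k_{0})^{2(k_{0}-1)}\not\equiv 0$. Hence the recursion uniquely determines each $P_{k}$ as a rational function of $x_{1}$, so any polynomial solution of \eqref{MAn2} satisfying a given initial condition of type \eqref{Cauchy} is unique. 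The main obstacle I anticipate is the combinatorial bookkeeping that justifies the non-characteristic claim, namely confirming that at order $x_{2}^{k-1}$ the coefficient $P_{k}$ enters $\det M$ solely through $[M_{11}]_{0}[M_{22}]_{k-1}$; once this is checked, the rest of the argument is a straightforward Cauchy--Kowalevski-type recursion.
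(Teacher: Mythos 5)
Your proof is correct and is essentially the paper's own argument: both determine the coefficients $P_k(x_1)$ (equivalently the normal derivatives $\partial_{x_2}^{k}P(x_1,0)$) recursively from the Cauchy data, the key point in each case being that the prefactor $\bigl(P P_{x_1x_1}x_1-(P_{x_1})^2x_1+P P_{x_1}\bigr)(x_1,0)=M_{11}|_{x_2=0}=(1+x_1/k)^{2k-2}$ is not identically zero. The only difference is bookkeeping: you match $x_2$-Taylor coefficients of $\det M=P^{4-s}$, while the paper differentiates $F_s=\D_2(P)-P^{3-s}$ repeatedly in $x_2$ along the axis $x_2=0$.
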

\begin{proof}
Let $F_s$ be a function whose zero defines the PDE \eqref{MAn2}, i.e.,  $F_s:=\D_2(P)-P^{3-s}$. Then,
from a straightforward computation, we  get the following formula
\begin{multline}\label{unique}
\frac{\partial^h F_s}{\partial x_2^h} (x_1,0)=\\ \left( h\left( P\frac{\partial^2 P}{\partial x_1^2}x_1-\left(\frac{\partial P}{\partial x_1}\right)^2x_1+P\frac{\partial P}{\partial x_1}\right)\frac{\partial^{h+1} P}{\partial x_2^{h+1}} + T^h\right)(x_1,0)\,,
\end{multline}
where $T^h(x_1,0)$  is a polynomial expression in $x_1$, $P(x_1,0)$ and derivatives of $P$ up to order $h+1$ (computed in $(x_1,0)$), that does not contain $\frac{\partial^h P}{\partial x_2^h}(x_1,0)$ and  $\frac{\partial^{h+1} P}{\partial x_2^{h+1}}(x_1,0)$.
If $P$ is a  polynomial solution to \eqref{MAn2} satisfying an initial condition of type \eqref{Cauchy},   $P(x_1,0)=\left(1+\frac{x_1}{k}\right)^k$ for a suitable integer $k$, hence we have
$$\left( P\frac{\partial^2 P}{\partial x_1^2}x_1-\left(\frac{\partial P}{\partial x_1}\right)^2x_1+P\frac{\partial P}{\partial x_1}\right)(x_1,0)=\left( \frac{x_1}{k}+1\right)^{2k-2}\not\equiv0.$$
By considering formula \eqref{unique} when $h=1$, we realize that  initial conditions \eqref{Cauchy} uniquely determine $\frac{\partial^{2} P}{\partial x_2^{2}} (x_1,0)$, from which one obtains $\frac{\partial^{2+h} P}{\partial x_1^h\partial x_2^{2}} (x_1,0)$ for every $h\in \N$. By iteration, we get  the whole Taylor expansion of $P$ on the line $x_2=0$. Therefore, we get the statement of the lemma.
\end{proof}

\small{}

\end{document}